\documentclass[11pt,a4paper]{article}
\usepackage[T1]{fontenc}
\usepackage[utf8]{inputenc}
\usepackage{lmodern,amsmath,amssymb,amsthm}
\usepackage[margin=1in]{geometry}
\usepackage{microtype}
\usepackage[colorlinks=true,linkcolor=blue,citecolor=blue,urlcolor=blue]{hyperref}
\newtheorem{theorem}{Theorem}
\newtheorem{lemma}[theorem]{Lemma}
\newtheorem{proposition}[theorem]{Proposition}
\theoremstyle{remark}
\newcommand{\Z}{\mathbb Z}
\newcommand{\C}{\mathbb C}
\newcommand{\PP}{\mathbb P}
\DeclareMathOperator{\Tor}{Tor}
\DeclareMathOperator{\Bl}{Bl}
\title{Torsion Cycles in Projective Threefolds:\\\large From Serre's Construction to Artin--Mumford}
\author{Alberto Verjovsky}
\date{}
\begin{document}
\maketitle
\begin{center}
\textit{To my teacher Solomon Lefschetz}
\end{center}
\medskip
\begin{abstract}
The torsion subgroup of third integral cohomology is a birational invariant of smooth projective complex threefolds; its nonvanishing obstructs stable rationality. We recall a proof using Hironaka's elimination of indeterminacies and the integral blowup formula. The Godeaux--Serre construction and Poincar\'e duality imply that every finite abelian group is the third integral cohomology group of a smooth projective complex threefold. This classical realization also has a description through families of concurrent plane lines. Their incidence threefolds have smooth normalizations, obtained by an explicit blowup and described as projective-line bundles over smooth surfaces. The Lefschetz theorem determines the fundamental group of these surfaces, and the projective bundle formula computes the third cohomology. The resulting threefolds are birational to products of surfaces with the projective line, and their torsion three-cycles are represented by sphere bundles over loops. Examples and the comparison with the unirational Artin--Mumford threefold explain the role of the base surface and the distinction between smooth blowups and resolution of a singular space.
\end{abstract}

\medskip
\noindent\textbf{Keywords:} Projective threefolds; torsion in integral cohomology; birational invariants; Godeaux--Serre construction; Artin--Mumford example; concurrent plane lines.

\smallskip
\noindent\textbf{2020 Mathematics Subject Classification:} Primary 14F25; Secondary 14E08, 14J30, 14E15.

\bigskip

All varieties are over $\C$, and homology and cohomology refer to the analytic topology with integral coefficients. A threefold has complex dimension three. For an abelian group $A$, its torsion subgroup is $\Tor A=\{a\in A:na=0\text{ for some }n\geq1\}$.

\section{The classical realization from surfaces}

This paper is expository. Its starting point is that the torsion subgroup of $H^3$ is a birational invariant of smooth projective complex threefolds. Since it vanishes for projective space, nonzero torsion obstructs rationality, and indeed stable rationality. We begin with a proof using Hironaka's elimination of indeterminacies and integral topology, as in Artin and Mumford \cite[\S1, Proposition 1]{AM}.

Its particular emphasis is an explicit construction from families of concurrent plane lines, together with a resolution whose geometry makes the cohomology calculation transparent. The underlying existence result is a classical consequence of the Godeaux--Serre construction, Poincar\'e duality, and the K\"unneth formula. The incidence construction relates this result to the geometry of singular plane curves and to the distinction between smooth blowups and resolution.

Artin and Mumford's example \cite{AM} is a smooth projective unirational threefold whose third integral cohomology has nonzero torsion. The simultaneous occurrence of unirationality and torsion is essential to that example. Without the unirationality requirement, realization of a prescribed finite abelian group follows from the Godeaux--Serre construction and elementary topology.

\begin{proposition}[Birational invariance]\label{torsion-birational}
If $Y$ and $Y'$ are birational smooth connected projective complex threefolds, then
\[
 \Tor H^3(Y,\Z)\cong\Tor H^3(Y',\Z).
\]
The same assertion holds in every dimension and is invariant under taking a product with projective space. In particular, a stably rational smooth projective variety has torsion-free third integral cohomology.
\end{proposition}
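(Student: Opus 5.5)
We follow Artin and Mumford. By Hironaka's elimination of indeterminacies, a birational map $f\colon Y\dashrightarrow Y'$ is resolved by a sequence of blowups
\[
 \tilde Y=Y_k\to Y_{k-1}\to\cdots\to Y_0=Y
\]
along smooth centers, together with a birational morphism $g\colon\tilde Y\to Y'$. It therefore suffices to prove two facts. First, blowing up a smooth center does not change $\Tor H^3$. Second, $\Tor H^3(Y')$ is a direct summand of $\Tor H^3(\tilde Y)$. Granting both, $\Tor H^3(Y')$ is a summand of $\Tor H^3(Y)$. Running the argument for $f^{-1}$ makes $\Tor H^3(Y)$ a summand of $\Tor H^3(Y')$. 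Since these groups are finitely generated (compact manifolds), the mutual summand relation forces $\Tor H^3(Y)\cong\Tor H^3(Y')$: each is finite, and each has order at most that of the other.

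For the first fact we use the integral blowup formula. Let $Z\subset X$ be smooth of codimension $r\ge2$, and let $\pi\colon\Bl_Z X\to X$ be the blowup. Then
\[
 H^n(\Bl_Z X,\Z)\cong H^n(X,\Z)\oplus\bigoplus_{i=1}^{r-1}H^{n-2i}(Z,\Z).
\]
Here the extra summands come from the projective bundle formula on the exceptional divisor $E=\PP(N_{Z/X})$, pushed forward by the Gysin map of $E\hookrightarrow\Bl_Z X$. For $n=3$ the new summands are $H^{1}(Z)$ and $H^{-1}(Z)=0$, and $H^1$ of any space is torsion-free. So $\Tor H^3$ is unchanged by each blowup, in every dimension.

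For the second fact, $g\colon\tilde Y\to Y'$ is a birational morphism of compact oriented manifolds, hence has degree one. Therefore $g_*g^*=\mathrm{id}$ on $H^*(Y')$, where $g_*$ is the Gysin pushforward defined by Poincaré duality. This makes $g^*$ split injective, so $\Tor H^3(Y')$ is a direct summand of $\Tor H^3(\tilde Y)$.

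The product statement follows from the Künneth formula:
\[
 H^3(Y\times\PP^m)\cong H^3(Y)\oplus H^1(Y).
\]
No Tor terms arise, because $H^*(\PP^m)$ is free. Since $H^1(Y)$ is torsion-free, $\Tor H^3(Y\times\PP^m)\cong\Tor H^3(Y)$. Finally, suppose $Y\times\PP^m$ is birational to $\PP^{n+m}$. Then
\[
 \Tor H^3(Y)\cong\Tor H^3(\PP^{n+m})=0.
\]

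The main obstacle is justifying the integral blowup formula. One must check that the Gysin maps and $\pi^*$ assemble into an isomorphism over $\Z$, not merely over $\mathbb Q$. We would argue with the long exact sequences of the pairs $(\Bl_Z X,E)$ and $(X,Z)$. Excision identifies their relative groups, and the Leray–Hirsch theorem for $E\to Z$ shows that the resulting comparison map is a split isomorphism. Only the low-degree part $n\le3$ is actually needed.
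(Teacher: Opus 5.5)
Your proposal is correct and follows essentially the same route as the paper: Hironaka's elimination of indeterminacies, the integral blowup formula whose only new degree-three contribution is the torsion-free $H^1(Z,\Z)$, the degree-one identity $g_!g^*=\mathrm{id}$ via Poincar\'e duality, finiteness to upgrade the two mutual embeddings to an isomorphism, and the K\"unneth formula for products with projective space. The only cosmetic difference is that you record split injectivity (direct summands), whereas the paper uses plain injectivity and compares orders.
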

\begin{proof}
First consider the blowup $b:\widetilde Y=\Bl_ZY\to Y$ along a smooth center of codimension $c\geq2$. The integral blowup formula is
\[
 H^k(\widetilde Y,\Z)
 \cong H^k(Y,\Z)\oplus
 \bigoplus_{j=1}^{c-1}H^{k-2j}(Z,\Z),
\]
with $b^*$ identifying the first summand. This formula follows from the Thom isomorphism and the projective bundle formula for the exceptional divisor $\PP(N_{Z/Y})$; all normal bundles involved are complex, hence integrally oriented. In degree three only $j=1$ contributes, so
\[
 H^3(\widetilde Y,\Z)\cong H^3(Y,\Z)\oplus H^1(Z,\Z).
\]
The universal coefficient theorem gives
$H^1(Z,\Z)=\operatorname{Hom}(H_1(Z,\Z),\Z)$, which is torsion-free. Consequently $b^*$ is an isomorphism on torsion. In dimension three the nontrivial centers are points and smooth curves: a point adds nothing in degree three, and a curve adds its free group $H^1$. Blowing up a smooth divisor is an isomorphism.

Now let $\phi:Y\dashrightarrow Y'$ be birational. By Hironaka's elimination of indeterminacies \cite{Hironaka}, there is a smooth projective variety $W$ and morphisms
\[
 Y\xleftarrow{\ b\ }W\xrightarrow{\ f\ }Y',
 \qquad f=\phi\circ b,
\]
where $b$ is a sequence of blowups along smooth centers and $f$ is birational. Thus $b^*$ identifies $\Tor H^3(Y,\Z)$ with $\Tor H^3(W,\Z)$.

The map $f$ has topological degree one. To see why its pullback is injective even on torsion, let $D_W,D_{Y'}$ denote integral Poincar\'e duality and define the cohomological pushforward by
\[
 f_!=D_{Y'}^{-1}\circ f_*\circ D_W.
\]
For every integral cohomology class $a$, naturality of the cap product and $f_*[W]=[Y']$ give
\[
 f_!(f^*a)
 =D_{Y'}^{-1}\bigl(f_*(f^*a\cap[W])\bigr)
 =D_{Y'}^{-1}(a\cap[Y'])=a.
\]
Hence $f^*$ injects $\Tor H^3(Y',\Z)$ into $\Tor H^3(W,\Z)$, and therefore into a group isomorphic to $\Tor H^3(Y,\Z)$. Applying the same argument to $\phi^{-1}$ gives an injection in the reverse direction. Both torsion groups are finite, since smooth projective varieties have finitely generated integral cohomology. Their orders are consequently equal, and either injection is an isomorphism.

Nothing in this argument depends on dimension three. For $r\geq1$, the integral K\"unneth formula gives
\[
 H^3(Y\times\PP^r,\Z)
 \cong H^3(Y,\Z)\oplus H^1(Y,\Z),
\]
and again the second summand is torsion-free. Thus the invariant is unchanged by products with projective space and by birational equivalence. If $Y$ is stably rational, some $Y\times\PP^r$ is birational to a projective space, whose third cohomology is zero. This proves the last assertion.
\end{proof}

We now turn to the realization of prescribed torsion groups.

\begin{lemma}[Godeaux--Serre]\label{GS}
Every finite group is the fundamental group of a smooth connected projective complex surface.
\end{lemma}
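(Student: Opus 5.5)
The plan is the classical Godeaux--Serre argument: let $G$ act freely on a smooth projective variety with trivial fundamental group, cut the variety down to a $G$-invariant smooth surface while keeping the action free and the surface simply connected, and then pass to the quotient. Fix a finite group $G$ and a faithful linear representation $V$ of $G$, for instance the regular representation, so that $G$ acts on $\PP(V^{\oplus m})$ for some $m$ to be chosen. The fixed locus of each $g\neq 1$ is a finite union of linear subspaces, namely projectivized eigenspaces. Taking $m$ large, each such eigenspace has codimension at least a prescribed number. Let $U$ be the complement of all these fixed loci; then $G$ acts freely on $U$. The complement $F=\PP(V^{\oplus m})\setminus U$ has dimension $N-c$, where $N=\dim\PP(V^{\oplus m})$ and $c$ grows linearly in $m$.

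Next, I descend to the quotient. The quotient $X=\PP(V^{\oplus m})/G$ is a projective variety, since finite quotients of projective varieties are projective, for example via $\operatorname{Proj}$ of the ring of invariants. Moreover $X$ is smooth on the image $U/G$ of $U$. I embed $X$ projectively and intersect it with $N-2$ general hyperplanes of large degree. I choose $c\geq 3$, so that $\dim F\leq N-3$ and the image of $F$ in $X$ has dimension at most $N-3$. Then a general complete intersection surface $S\subset X$ misses the singular locus, and by Bertini's theorem $S$ is smooth and connected. Its preimage $\widetilde S\subset\PP(V^{\oplus m})$ is a smooth complete intersection surface, cut out by $G$-invariant hypersurfaces and contained in $U$. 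So $G$ acts freely on $\widetilde S$ and $\widetilde S\to S$ is an unramified Galois covering with group $G$.

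The key step, and the main obstacle, is to show $\pi_1(\widetilde S)=1$. Then $\widetilde S\to S$ is the universal cover, so $\pi_1(S)\cong G$. For this I use the Lefschetz hyperplane theorem in its homotopy form. A smooth complete intersection of dimension $\geq 2$ in projective space is simply connected, and $\widetilde S$ is such a complete intersection, of the invariant hypersurfaces pulled back from $X$. The point to check is that these pullbacks, which are general members of linear systems of $G$-invariant forms, are smooth and meet transversally. This is clear away from $F$ by Bertini on $X$, since the quotient map is étale on $U$. Near $F$ the surface $\widetilde S$ is empty by the dimension count. Connectedness of $\widetilde S$ follows from the same Lefschetz theorem, since positive-dimensional complete intersections in projective space are connected. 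Finally, applying Lefschetz to the ambient $\PP^N$ rather than to the singular $X$ avoids any difficulty with singularities. This is why I prefer to phrase the last step upstairs.
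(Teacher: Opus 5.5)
Your proposal is correct and follows the same route as the construction the paper recalls: take copies of the regular representation so that the non-free locus has codimension at least three, choose a general complete-intersection surface in the quotient avoiding its image, apply Lefschetz to the smooth complete-intersection preimage, and pass to the free quotient. One small correction: faithfulness of $V$ alone is not enough, since an element acting on $V$ by a nontrivial scalar (e.g.\ $-1$ on $\C$ for $\Z/2\Z$) acts trivially on $\PP(V^{\oplus m})$. You need that no $g\neq 1$ acts by a scalar, which does hold for the regular representation you propose.
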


A construction and proof are given in \cite[\S2.4, Theorem 2.4.1 and footnote 6]{Achinger}; see also \cite[\S16]{Serre}. To recall the construction, take enough copies of the regular representation that the locus with nontrivial projective stabilizer has codimension greater than two. In the finite quotient of projective space, a general complete intersection of sufficiently ample hypersurfaces of dimension two avoids the image of this locus. Its inverse image is a smooth complete-intersection surface with a free group action. The Lefschetz theorem makes this surface simply connected. Its quotient is the required surface.

\begin{proposition}\label{product}
For every finite abelian group $G$, there is a smooth connected projective complex threefold $N$ with $H^3(N,\Z)\cong G$.
\end{proposition}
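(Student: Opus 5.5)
We take $N=S\times\PP^1$, where $S$ is the smooth connected projective surface that Lemma~\ref{GS} provides with $\pi_1(S)\cong G$. The goal is $H^3(N,\Z)\cong G$, and we reach it by computing the cohomology of $S$ in low degrees and then applying the K\"unneth formula.

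\textbf{Step 1: cohomology of $S$ in low degrees.} Since $\pi_1(S)=G$ is abelian, Hurewicz gives $H_1(S,\Z)\cong G$, which is finite. By the universal coefficient theorem,
\[
 H^1(S,\Z)=\operatorname{Hom}(H_1(S,\Z),\Z)=0 .
\]
For $H^3$, note that $S$ is a closed oriented real $4$-manifold, so Poincar\'e duality gives $H^3(S,\Z)\cong H_1(S,\Z)\cong G$. As a consistency check, universal coefficients gives $\Tor H^2(S,\Z)\cong \Tor H_1(S,\Z)\cong G$. The value $H^3(S)\cong G$ can also be read off as $H^3=\operatorname{Hom}(H_3,\Z)\oplus\operatorname{Ext}(H_2,\Z)$: here $H_3\cong H^1=0$ by duality, and $\Tor H_2\cong\Tor H^3$. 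It is cleaner, though, to use duality directly.

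\textbf{Step 2: K\"unneth.} The cohomology of $\PP^1$ is free, being $\Z$ in degrees $0$ and $2$. So the integral K\"unneth formula has no Tor terms and gives
\[
 H^3(S\times\PP^1,\Z)\cong H^3(S,\Z)\oplus H^1(S,\Z)\cong G\oplus 0=G .
\]
Finally, $N$ is smooth, connected, projective and of complex dimension three, so it has all the required properties.

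\textbf{Main obstacle.} The only point needing care is the identification $H^3(S,\Z)\cong G$ with integer coefficients, including torsion. This comes down to applying Poincar\'e duality to the compact oriented $4$-manifold $S$ together with Hurewicz for the abelian group $\pi_1(S)$; both are standard. Everything else is formal, and the substantive input is entirely Lemma~\ref{GS}.
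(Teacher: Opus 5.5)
Your proof is correct and is the paper's argument: you take $S$ from Lemma~\ref{GS}, get $H^1(S,\Z)=0$ and $H^3(S,\Z)\cong H_1(S,\Z)\cong G$ from Hurewicz, the universal coefficient theorem and Poincar\'e duality, and conclude with the K\"unneth formula for $N=S\times\PP^1$. Your alternative aside is circular as phrased: to read off $H^3$ from $\operatorname{Ext}(H_2,\Z)$, use $\Tor H_2\cong\Tor H^2\cong\Tor H_1\cong G$ (duality plus universal coefficients), not $\Tor H_2\cong\Tor H^3$; the main argument does not depend on it.
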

\begin{proof}
Choose a surface $S$ with $\pi_1(S)\cong G$ by Lemma~\ref{GS}. Since $G$ is abelian, the first homology group is $H_1(S,\Z)\cong G$. Integral Poincar\'e duality for the compact oriented real four-manifold $S$, and the universal coefficient theorem, give
\[
 H^3(S,\Z)\cong H_1(S,\Z)\cong G,
 \qquad H^1(S,\Z)=\operatorname{Hom}(G,\Z)=0.
\]
Set $N=S\times\PP^1$. The integral cohomology of $\PP^1$ is free, so the K\"unneth formula gives
\[
 H^3(N,\Z)\cong H^3(S,\Z)\oplus H^1(S,\Z)\cong G.
\]
\end{proof}

The same classical ingredients give the following realization in the universal family of plane curves. A binary form is a homogeneous polynomial in two variables. A resolution of an integral variety $X$ is a proper birational morphism from a smooth variety to $X$. The strict transform under a blowup is the closure of the inverse image of the complement of the center.

\begin{theorem}\label{main}
Let $G$ be a finite abelian group. There exist an integer $d\geq2$, a smooth connected projective surface
\[
 S\subset\PP H^0(\PP^2,\mathcal O_{\PP^2}(d)),
\]
and a point $q\in\PP^2$ with the following properties:
\begin{enumerate}
\item Every curve parametrized by $S$ is a union of $d$ lines through $q$, counted with multiplicity, and the general curve consists of distinct lines.
\item The incidence threefold
\[
 X=\{(s,z)\in S\times\PP^2:z\in C_s\}
\]
is integral. Its strict transform $M$ in $S\times\Bl_q\PP^2$ is smooth, and the induced morphism $M\to X$ is a projective resolution.
\item One has
\[
 H^3(M,\Z)\cong G,\qquad \pi_1(M)\cong G.
\]
\end{enumerate}
If $G\ne0$, then $M$ is neither stably rational nor unirational.
\end{theorem}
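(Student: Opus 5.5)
Choose a smooth surface $\Sigma$ with $\pi_1(\Sigma)\cong G$ by Lemma~\ref{GS}. Embed $\Sigma$ in some projective space. A general linear projection then gives a morphism $\Sigma\to\PP^d$ that is finite onto its image, and after a Veronese re-embedding it can be taken to be an embedding $\Sigma\hookrightarrow\PP^d$ for $d$ large. Identify $\PP^d$ with the projective space of binary forms of degree $d$ in the coordinates $(x,y)$ of the pencil of lines through $q=[0:0:1]$. A binary form $f(x,y)$, viewed as a ternary form independent of the third coordinate, defines a union of $d$ lines through $q$ counted with multiplicity. This gives a linear embedding $\PP^d\subset\PP H^0(\PP^2,\mathcal O(d))$, and we let $S$ be the image of $\Sigma$. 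The discriminant locus of forms with repeated roots is a hypersurface in $\PP^d$. After replacing the embedding by a general projective transformation, $S$ is not contained in it, which gives item~1.

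For item~2, let $E\to\PP^1$ denote the map $\Bl_q\PP^2\to\PP^1$ to the pencil, a $\PP^1$-bundle. The curve $C_s$ is the cone over the zero scheme of $f_s$ on $\PP^1$. Its strict transform in $\Bl_q\PP^2$ is therefore the preimage of that zero scheme, together with the exceptional curve counted appropriately. I would define $M$ directly as the preimage in $S\times\Bl_q\PP^2$ of
\[
D=\{(s,t)\in S\times\PP^1: f_s(t)=0\}.
\]
Then $M\to D$ is a $\PP^1$-bundle, so $M$ is smooth exactly when $D$ is smooth.

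The main obstacle is showing that $D$ is a smooth connected surface and that $D\to S$ can be arranged so that $\pi_1(D)\cong\pi_1(S)$. The plan is to show that $D\to\PP^1$ is a fibration whose fibres are hyperplane sections of $S\subset\PP^d$. For fixed $t$, the condition $f(t)=0$ is a hyperplane $H_t$ in $\PP^d$, namely evaluation at $t$. Hence $D$ is the incidence variety of $S$ with the rational normal curve's family of osculating-type hyperplanes $H_t$. Equivalently, $D$ is the pullback of the incidence divisor in $\PP^d\times\PP^1$, which has type $(1,d)$.

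With this description:
\begin{enumerate}
\item Smoothness of $D$ follows from a Bertini argument after applying a general element of $PGL_{d+1}$ to $S$. The divisor $\{f(t)=0\}\subset\PP^d\times\PP^1$ is smooth, and it is moved by a transitive group action on the $\PP^d$ factor. Kleiman transversality then gives that $D=(S\times\PP^1)\cap\{f(t)=0\}$ is smooth.
\item Projection identifies $D$ with a family of hyperplane sections of $S$. The map $D\to S$ is finite of degree $d$.
\item Because $D$ is an ample divisor in $S\times\PP^1$, the Lefschetz hyperplane theorem gives $\pi_1(D)\cong\pi_1(S\times\PP^1)\cong G$, and in particular $D$ is connected.
\end{enumerate}
It then follows that $M$ is connected and smooth, that $X$ is its image, and that $X$ is integral as the image of an irreducible variety. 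Finally, $M\to X$ is birational because it is an isomorphism off the exceptional curve, and it is projective. This proves item~2.

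For item~3, the projective bundle formula gives
\[
H^3(M,\Z)\cong H^3(D,\Z)\oplus H^1(D,\Z).
\]
Since $\pi_1(D)\cong G$ is finite abelian, we have $H^1(D,\Z)=0$, and Poincaré duality gives $H^3(D,\Z)\cong H_1(D,\Z)\cong G$, exactly as in Proposition~\ref{product}. The homotopy sequence of the $\PP^1$-bundle gives $\pi_1(M)\cong\pi_1(D)\cong G$. If $G\neq0$, Proposition~\ref{torsion-birational} rules out stable rationality. A unirational smooth projective variety is simply connected, by Serre's theorem, so $M$ is not unirational either.
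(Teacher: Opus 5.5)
Your proposal is correct and is essentially the paper's proof. Your $D$ is the paper's surface $T$, and $M=D\times_{\PP^1}\Bl_q\PP^2$ is the same $\PP^1$-bundle; Lefschetz for the ample divisor $D\subset S\times\PP^1$, the projective bundle formula, birational invariance and Serre's theorem are used identically. The only variation is how you get smoothness of $D$: you apply Kleiman to $PGL_{d+1}$-translates of a fixed embedding, which is valid with $\PP^d$ as the homogeneous space and the projection $I\to\PP^d$ as the second map, whereas the paper takes a general section of $A\boxtimes\mathcal O_{\PP^1}(d)$, lets its coefficients define the embedding, and applies Bertini.

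Two small points to tighten:
\begin{enumerate}
\item ``Image of an irreducible variety'' gives only irreducibility of $X$. For the scheme $\{F(s;x,y)=0\}$, add the paper's remark that it is a hypersurface in a smooth variety, so it has no embedded components. It is also smooth off $S\times\{q\}$, where it is an $\mathbb A^1$-bundle over $D$; together these give reducedness.
\item The strict transform of $C_s$ does not include the exceptional curve; that is the total transform. That your $M$ is the strict transform of $X$ follows instead from irreducibility of $M$, which contains $X\setminus(S\times\{q\})$ as a dense open subset.
\end{enumerate}
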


The integer $d$ is allowed to depend on $G$. The assertion does not prescribe the group for each fixed degree.

\section{The surface of line choices}

Choose a surface $S_0$ with $\pi_1(S_0)\cong G$ by Lemma~\ref{GS}, and choose a very ample line bundle $A$ on $S_0$. Thus the complete linear system of $A$ embeds $S_0$ in projective space. Put $r+1=h^0(S_0,A)$, and choose $d\geq\max\{2,r\}$.

Fix $q=[0:0:1]\in\PP^2$. The lines through $q$ are parametrized by $\PP^1$: the point $[u:v]$ corresponds to the line
\[
 \ell_{[u:v]}=\{[x:y:z]:vx-uy=0\}.
\]
A nonzero homogeneous binary form of degree $d$ determines a union of $d$ such lines. These forms, modulo scalars, constitute a linear subspace $\PP^d$ of the parameter space of degree-$d$ plane curves.

On $S_0\times\PP^1$ choose a general section
\[
 F\in H^0(S_0,A)\otimes H^0(\PP^1,\mathcal O_{\PP^1}(d)).
\]
Writing
\begin{equation}\label{binary}
 F(s;u,v)=\sum_{i=0}^d a_i(s)u^iv^{d-i},
\end{equation}
the coefficients $a_0,\ldots,a_d$ span $H^0(S_0,A)$ for a general choice, since $d+1\geq h^0(S_0,A)$. They have no common zero and define a closed embedding
\[
 \iota:S_0\longrightarrow\PP^d,\qquad
 s\longmapsto[F(s;u,v)].
\]
Indeed, this is the embedding by the complete linear system of $A$, followed by a linear embedding of projective spaces. Set $S=\iota(S_0)$ and identify $S$ with $S_0$.

Define the surface of line choices by
\begin{equation}\label{Tdef}
 T=\{(s,[u:v])\in S\times\PP^1:F(s;u,v)=0\}.
\end{equation}
The line bundle $A\boxtimes\mathcal O_{\PP^1}(d)$, meaning the tensor product of the two pulled-back line bundles, is very ample. Bertini's theorem implies that $T$ is smooth for general $F$. The conditions of smoothness and coefficient spanning are nonempty Zariski-open conditions in the same vector space of sections, and therefore hold simultaneously.

The projection $T\to S$ is finite of degree $d$: every fiber is the zero scheme of a nonzero binary form of degree $d$. For general $F$, the general form $F(s;u,v)$ has distinct roots. To see that this is a nonempty open condition, fix one point of $S$ and use surjectivity of evaluation of the globally generated line bundle $A$ to prescribe any square-free binary form there.

The form of the Lefschetz theorem used here states that, for a smooth connected projective variety $Y$ of complex dimension $m\geq3$ and a smooth ample divisor $D$, inclusion induces an isomorphism on $\pi_1$ and on integral homology in degrees $i<m-1$. Applying this to $T\subset S\times\PP^1$ gives
\begin{equation}\label{lefschetz}
 \pi_1(T)\cong\pi_1(S\times\PP^1)\cong G,
 \qquad H_1(T,\Z)\cong G.
\end{equation}
In particular, $T$ is connected, hence integral. The homological and homotopical forms of the theorem are stated in \cite[Theorem 2.3.1]{Achinger}.

\section{Resolution and cohomology}

Replace $u,v$ in \eqref{binary} by $x,y$ and define
\[
 X=\{(s,[x:y:z])\in S\times\PP^2:F(s;x,y)=0\}.
\]
This is the restriction of the universal degree-$d$ plane curve to $S$. Every fiber consists of lines through $q$. Since $d\geq2$, all first derivatives of the equation vanish along $S\times\{q\}$, so $X$ is singular there.

The blowup $B=\Bl_q\PP^2$ has the incidence description
\[
 B=\{([x:y:z],[u:v])\in\PP^2\times\PP^1:xv=yu\}.
\]
The projection $\rho:B\to\PP^1$ is a projective-line bundle: its fiber over $[u:v]$ is the line $\ell_{[u:v]}$, with its direction retained at $q$. Here a projective-line bundle is the projectivization of a rank-two algebraic vector bundle.

Let
\begin{equation}\label{Mdef}
 M=T\times_{\PP^1}B.
\end{equation}
Then $M\to T$ is a projective-line bundle. Consequently $M$ is smooth, connected, and projective. Forgetting the direction defines a projective morphism $M\to X$, which is an isomorphism away from $S\times\{q\}$: a point $[x:y:z]\ne q$ determines its unique direction $[x:y]$.

This morphism identifies $M$ with the strict transform of $X$. In a blowup chart write $(x,y)=(t,tw)$; the pulled-back equation becomes
\[
 F(s;t,tw)=t^dF(s;1,w).
\]
Removing the exceptional factor $t^d$ leaves precisely the equation of \eqref{Mdef}. The other chart is identical. The complement in $M$ of the section lying over $S\times\{q\}$ is integral and dense. Moreover, $X$ is a hypersurface in a smooth variety, has no component contained in the codimension-two subset $S\times\{q\}$ of $S\times\PP^2$, and is generically reduced. A hypersurface in a smooth variety has no embedded associated components, so generic reducedness implies reducedness here. Thus $X$ is integral, and $M\to X$ is a resolution.

\subsection*{The resolution is the normalization}
An integral variety is normal if its local rings are integrally closed in its function field. Its normalization is obtained by taking these integral closures on affine open sets; for a complex algebraic variety the normalization map is finite and birational. Smooth varieties are normal.

Write $\nu:M\to X$ for the morphism above. Away from the concurrency section it is an isomorphism. Over $(s,q)$ its fiber is the finite scheme of directions
\[
 \{[u:v]\in\PP^1:F(s;u,v)=0\}.
\]
Thus $\nu$ is proper with finite fibers, hence finite \cite[Tag 02LS]{Stacks}. Since $M$ is normal and $\nu$ is birational, $\nu$ is the normalization of $X$ \cite[Tag 0BXQ]{Stacks}. For completeness, on an affine open subset of $X$ write the finite map as $A\subset B\subset K$, where $K$ is their common function field. The ring $B$ is integral over $A$ and integrally closed in $K$. Every element of $K$ integral over $A$ is also integral over $B$, and therefore belongs to $B$. Hence $B$ is exactly the integral closure of $A$.

For a parameter with $d$ distinct directions, the normalization has $d$ distinct points over $(s,q)$. Analytically near such a parameter, the roots can be labeled and the incidence equation factors into $d$ smooth branches meeting along the concurrency section. Their labels can be permuted on continuation around loops in the parameter surface. This explains how the total algebraic variety $X$ can be integral while these local analytic branches, and its general fibers, are reducible.

The section $T\subset M$ is the intersection with the exceptional divisor of the ambient blowup. Its map to $S\times\{q\}$ is finite of degree $d$. In particular, this map separates branches without contracting a positive-dimensional fiber. This geometry accounts for the difference from a blowup of a smooth threefold along a smooth curve.

\subsection*{The cohomology calculation}
Let $p:M\to T$ be the bundle projection and let $\xi=c_1(\mathcal O_{M/T}(1))$. The integral projective bundle formula gives
\[
 H^3(M,\Z)=p^*H^3(T,\Z)\oplus
 \xi\smile p^*H^1(T,\Z).
\]
By \eqref{lefschetz} and the universal coefficient theorem,
\[
 H^1(T,\Z)=\operatorname{Hom}(H_1(T,\Z),\Z)=0.
\]
Integral Poincar\'e duality on the compact oriented real four-manifold $T$ gives
\[
 H^3(T,\Z)\cong H_1(T,\Z)\cong G.
\]
It follows that $H^3(M,\Z)\cong G$. The homotopy exact sequence of the bundle, whose fiber is simply connected, gives $\pi_1(M)\cong\pi_1(T)\cong G$.

The group $\Tor H^3$ is a stable birational invariant of smooth projective complex varieties and vanishes for projective space \cite{AM}. Here stable rationality means that $M\times\PP^m$ is birational to projective space for some $m\geq0$. Thus $G\ne0$ obstructs stable rationality. Unirationality means the existence of a dominant rational map from projective space. A smooth projective complex unirational variety is simply connected \cite{SerreUnirational}; see also \cite[Chapter IV]{Kollar}. Since $\pi_1(M)\cong G$, nonzero $G$ also excludes unirationality. This completes the proof of Theorem~\ref{main}.

\section{Birational description and examples}

\begin{proposition}\label{birational}
The threefold $M$ of Theorem~\ref{main} is birational to $T\times\PP^1$. Consequently, $M$ is stably rational if and only if $T$ is stably rational, and $M$ is unirational if and only if $T$ is unirational.
\end{proposition}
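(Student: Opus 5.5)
The plan is to use that $M=T\times_{\PP^1}B$ is, by construction, a projective-line bundle $p:M\to T$, namely the projectivization of a rank-two algebraic vector bundle pulled back from $\PP^1$ along $T\to\PP^1$. A projective bundle of an algebraic vector bundle is Zariski locally trivial. So over a dense Zariski open $U\subset T$ on which the vector bundle is trivial we get $p^{-1}(U)\cong U\times\PP^1$. Since $M$ and $T\times\PP^1$ are integral (the surface $T$ is connected and smooth, hence integral), this open isomorphism is already a birational equivalence $M\dashrightarrow T\times\PP^1$. In fact one can do better: $B=\PP(\mathcal O\oplus\mathcal O(-1))$ over $\PP^1$, and a rank-two bundle becomes trivial after restricting to the open $\PP^1\setminus\{\text{pt}\}\cong\mathbb A^1$. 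Concretely, on the locus $v\neq0$ of $T$ the line $\ell_{[u:v]}$ is parametrized by $[x:z]$ with $y$ determined. This gives the explicit birational map, though only the local triviality statement is needed.

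For the consequences, stable rationality is by definition a property of the birational class. If $T\times\PP^m$ is birational to projective space, then $M\times\PP^{m-1}$ is too (for $m\geq1$), since $M\times\PP^{m-1}$ is birational to $T\times\PP^1\times\PP^{m-1}=T\times\PP^m$. The converse works the same way, since $M\times\PP^m$ is birational to $T\times\PP^{m+1}$. Shifting $m$ by one is harmless because $m$ is arbitrary; the case $m=0$ for $M$ corresponds to $m=1$ for $T$.

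For unirationality, first suppose $T$ is unirational, with a dominant map $\PP^n\dashrightarrow T$. Then $\PP^n\times\PP^1\dashrightarrow T\times\PP^1\dashrightarrow M$ is dominant, and $\PP^n\times\PP^1$ is rational. Conversely, if $\PP^n\dashrightarrow M$ is dominant, compose with the surjective morphism $p:M\to T$ to get a dominant rational map $\PP^n\dashrightarrow T$. The only care needed is that a composition of dominant rational maps is defined and dominant, which holds because the first map's image meets the domain of the second in a dense set.

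The main obstacle is merely justifying local triviality of $M\to T$. I would handle this by noting that $\rho:B\to\PP^1$ is the projectivization $\PP(\mathcal O_{\PP^1}\oplus\mathcal O_{\PP^1}(-1))$, which is trivial over each standard affine chart of $\PP^1$; base change to $T$ preserves this. Everything else is formal.
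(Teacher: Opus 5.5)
Your proof is correct and follows the paper's route: Zariski-local triviality of the projective-line bundle $p:M\to T$ gives $p^{-1}(U)\cong U\times\PP^1$, and the stable-rationality and unirationality equivalences then follow formally (the paper phrases stable rationality through $\C(M)\cong\C(T)(t)$, while you use products with projective spaces, which is equivalent). One harmless slip is in your optional explicit remark: over $v\neq0$ the relation $vx=uy$ gives $x=uy/v$, so the line $\ell_{[u:v]}$ is parametrized by $[y:z]$, not by $[x:z]$, which degenerates on the line $x=0$.
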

\begin{proof}
Write $M=\PP_T(E)$ for a rank-two vector bundle $E$ on $T$. There is a nonempty Zariski-open subset $U\subset T$ on which $E$ is trivial. Thus
\[
 p^{-1}(U)\cong U\times\PP^1,
 \qquad \C(M)\cong\C(T)(t),
\]
where $t$ is transcendental over the function field $\C(T)$. This proves the birational assertion. Stable rationality is equivalent to the function field becoming purely transcendental after adjoining finitely many independent variables; the displayed equality proves the equivalence for $M$ and $T$. If $M$ is unirational, composition with $p$ shows that $T$ is unirational. Conversely, a dominant rational parametrization of $T$, together with one independent parameter, dominates $T\times\PP^1$ and hence $M$.
\end{proof}

The torsion is therefore inherited from the surface $T$. Since $H_1(T,\Z)\cong G$, the surface itself has $H^3(T,\Z)\cong G$. The family of plane curves provides an incidence description of the threefold and a direct resolution of its singularities.

\subsection*{A rational example}
Take $S=\PP^2$ with homogeneous coordinates $[a:b:c]$, $A=\mathcal O_{\PP^2}(1)$, and $d=2$. Set
\[
 F([a:b:c];u,v)=au^2+buv+cv^2.
\]
For each $[u:v]\in\PP^1$, this is a nonzero linear equation in $[a:b:c]$. Therefore $T\to\PP^1$ is a projective-line bundle. More explicitly, $T$ is the projectivization of the kernel of the surjective vector-bundle map
\[
 \mathcal O_{\PP^1}^{\oplus3}\longrightarrow\mathcal O_{\PP^1}(2),
 \qquad(a,b,c)\longmapsto au^2+buv+cv^2.
\]
It follows that $T$ is smooth and rational, and Proposition~\ref{birational} makes $M$ rational. Here $H^3(M,\Z)=0$. This illustrates that the singular concurrency section alone does not produce nonzero torsion. Conversely, the vanishing of this torsion is not a rationality criterion: smooth cubic threefolds have torsion-free $H^3$ and are not rational \cite{CG}. Indeed, homological Lefschetz gives $H_2(V,\Z)\cong H_2(\PP^4,\Z)\cong\Z$ for a smooth cubic threefold $V$, and the universal coefficient theorem gives $\Tor H^3(V,\Z)=0$.

\subsection*{An example with group $\Z/5\Z$}
Let
\[
 V=\{x_0^5+x_1^5+x_2^5+x_3^5=0\}\subset\PP^3,
\]
and let a primitive fifth root of unity $\zeta$ act by
\[
 [x_0:x_1:x_2:x_3]\longmapsto
 [x_0:\zeta x_1:\zeta^2x_2:\zeta^3x_3].
\]
The hypersurface $V$ is smooth. Each nonidentity group element has four distinct eigenvalues, so its fixed points in $\PP^3$ are the coordinate points; none lies on $V$. The action on $V$ is therefore free. Lefschetz gives $\pi_1(V)=0$, and the smooth projective quotient $S=V/(\Z/5\Z)$ has $\pi_1(S)\cong\Z/5\Z$. This is the classical Godeaux surface. Choosing a very ample line bundle on $S$ and applying the preceding construction gives
\[
 H^3(M,\Z)\cong\Z/5\Z,\qquad
 \pi_1(M)\cong\Z/5\Z.
\]
The free quotient exhibits the surface carrying the torsion explicitly; the embedding by a very ample line bundle determines the required degree of the plane-curve family.

\section{Blowups, resolutions, and the Artin--Mumford example}

Proposition~\ref{torsion-birational} proves invariance under blowups of a smooth variety along smooth centers and, by elimination of indeterminacies, under birational equivalence of smooth projective varieties. These smoothness hypotheses matter. For a resolution of a singular variety, Hironaka's theorem \cite{Hironaka} supplies smoothness but does not identify the torsion of the singular space with that of its resolution. In the construction above, the required group is computed directly on the explicit resolution $M$.

For the incidence threefold, the inverse image of the concurrency section is naturally $T$: a point of $T$ records a parameter and one line through $q$. The restriction of the normalization to this section is the finite degree-$d$ map $T\to S$. Thus the resolution separates the directions meeting at $q$. Its cohomology is determined by the surface $T$, through the projective bundle formula, rather than by applying the smooth blowup formula to the singular space $X$.

\section{Serre's realization and the Artin--Mumford obstruction}

Serre's realization theorem prescribes a finite fundamental group. For an abelian group $G$, Poincar\'e duality on a surface transfers this information to third cohomology:
\[
 \pi_1(S)\cong G
 \quad\Longrightarrow\quad
 H^3(S,\Z)\cong H_1(S,\Z)\cong G.
\]
Taking $S\times\PP^1$, or using the concurrent-line construction, gives a threefold with third cohomology $G$ and fundamental group $G$. For $G\ne0$, the fundamental group already rules out unirationality by Serre's theorem \cite{SerreUnirational}.

The Artin--Mumford threefold $V$ satisfies three different conditions simultaneously:
\[
 V\text{ is unirational},\qquad
 \pi_1(V)=0,\qquad
 \Tor H^3(V,\Z)\ne0.
\]
The last condition obstructs stable rationality. Thus the example distinguishes rationality from unirationality among simply connected smooth projective threefolds. The surface realization theorem does not imply this assertion.

The universal coefficient theorem describes the topological distinction. For a compact smooth manifold $Y$ it gives an exact sequence
\[
 0\longrightarrow
 \operatorname{Ext}^1_{\Z}(H_2(Y,\Z),\Z)
 \longrightarrow H^3(Y,\Z)
 \longrightarrow\operatorname{Hom}(H_3(Y,\Z),\Z)
 \longrightarrow0.
\]
Here $\operatorname{Ext}^1_{\Z}(A,\Z)$ is the group of equivalence classes of abelian-group extensions of $A$ by $\Z$. For finitely generated $A$, it is a finite group, naturally isomorphic to $\operatorname{Hom}(\Tor A,\mathbb Q/\Z)$. Since the right-hand term in the sequence is free, the sequence identifies
\[
 \Tor H^3(Y,\Z)
 \cong\operatorname{Ext}^1_{\Z}(H_2(Y,\Z),\Z).
\]
Simple connectedness forces $H_1(Y,\Z)=0$, but does not force $H_2(Y,\Z)$ to be torsion-free. In complex dimension two, integral Poincar\'e duality identifies $H^3$ with $H_1$. In complex dimension three, it identifies $H^3$ with $H_3$ instead. Accordingly, a simply connected threefold can have nonzero third-cohomology torsion arising from torsion in $H_2$. The Artin--Mumford example realizes this possibility under the geometric restriction of unirationality.

For $G=\Z/2\Z$, Theorem~\ref{main} gives a threefold with the same third-cohomology torsion group as the classical Artin--Mumford example, but with fundamental group $\Z/2\Z$. Its birational description $T\times\PP^1$ shows that the obstruction is inherited from a surface. These two constructions locate torsion in different geometric settings.

\section{Explicit three-cycles and pairs of lines}

\subsection*{Three-cycles in the concurrent-line construction}
Let $p:M\to T$ be the projective-line bundle of Theorem~\ref{main}. Choose a smooth embedded oriented loop $\gamma\subset T$ representing an element of order $n$ in $H_1(T,\Z)$. Every element of this first homology group has such a representative: it is represented by a loop, and general position in the real four-manifold $T$ removes self-intersections. The inverse image
\[
 Z_\gamma=p^{-1}(\gamma)
\]
is a closed oriented real three-manifold in $M$. The complex vector bundle $E|_\gamma$ is trivial, since every complex vector bundle over a circle is trivial. Thus
\[
 Z_\gamma\cong S^1\times\PP^1\cong S^1\times S^2.
\]
These are topological three-cycles; their real dimension is three.

\begin{proposition}
The assignment $[\gamma]\mapsto[Z_\gamma]$ induces an isomorphism
\[
 H_1(T,\Z)\xrightarrow{\ \sim\ }H_3(M,\Z).
\]
In particular, $[Z_\gamma]$ has exactly the same order as $[\gamma]$.
\end{proposition}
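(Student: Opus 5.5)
The plan is to identify the map $[\gamma]\mapsto[Z_\gamma]$ with the Gysin, or transfer, map $p^!:H_1(T,\Z)\to H_3(M,\Z)$ of the oriented $S^2$-bundle $p$. We then check that $p^!$ is an isomorphism by passing to cohomology through Poincar\'e duality and using the projective bundle formula already recorded in the proof of Theorem~\ref{main}.

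First define $p^!$ intrinsically by
\[
 p^!=D_M\circ p^*\circ D_T^{-1}:H_1(T,\Z)\to H_3(M,\Z),
\]
where $D_T:H^3(T,\Z)\to H_1(T,\Z)$ and $D_M:H^3(M,\Z)\to H_3(M,\Z)$ are the integral Poincar\'e duality isomorphisms, $D(a)=a\cap[\,\cdot\,]$. Both $T$ and $M$ are compact complex manifolds, hence canonically oriented, of real dimensions $4$ and $6$. Independence of the representative loop is automatic once $[Z_\gamma]=p^!([\gamma])$ is established.

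The geometric identification is the step needing care. The claim is that for a smooth embedded oriented loop $\gamma$ with Poincar\'e dual $\eta_\gamma\in H^3(T,\Z)$, the class $p^*\eta_\gamma$ is Poincar\'e dual in $M$ to the submanifold $p^{-1}(\gamma)$. The argument runs through Thom classes. Since $p$ is a smooth submersion, $p^{-1}(\gamma)$ is transverse to the fibers, and its normal bundle in $M$ is the pullback of the normal bundle $\nu$ of $\gamma$ in $T$. Hence the Thom class of $p^{-1}(\gamma)$ in a tubular neighborhood is $p^*$ of the Thom class of $\gamma$. Pulling back the dual classes, extended from the tubular neighborhoods by excision, gives $p^*\eta_\gamma=\eta_{Z_\gamma}$. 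The orientation of $Z_\gamma$ must be the one compatible with this convention: normal orientation pulled back, with the complex orientation of $M$. With that fixed, $[Z_\gamma]=p^*\eta_\gamma\cap[M]=p^!([\gamma])$. This matching of orientation conventions is the main obstacle; it affects only signs, and a sign does not change the isomorphism assertion.

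Finally, $p^*:H^3(T,\Z)\to H^3(M,\Z)$ is an isomorphism. The projective bundle formula gives $H^3(M,\Z)=p^*H^3(T,\Z)\oplus\xi\smile p^*H^1(T,\Z)$ with $p^*$ injective, and $H^1(T,\Z)=0$ by \eqref{lefschetz} and the universal coefficient theorem. Hence $p^!$ is a composite of three isomorphisms. Since every class in $H_1(T,\Z)$ is represented by an embedded loop, as noted before the statement, the assignment $[\gamma]\mapsto[Z_\gamma]$ is defined on all of $H_1(T,\Z)$ and equals $p^!$. It is therefore a group isomorphism onto $H_3(M,\Z)\cong G$, and it preserves orders of elements.
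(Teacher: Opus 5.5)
Your proposal is correct and is essentially the paper's proof. It uses the same transfer $p^!=D_M\circ p^*\circ D_T^{-1}$, the same identification $p^![\gamma]=[p^{-1}(\gamma)]$, and the same projective bundle formula with $H^1(T,\Z)=0$ to make $p^*$ an isomorphism on $H^3$; the only difference is that you justify the identification with pulled-back Thom classes of tubular neighborhoods, where the paper simply cites naturality of Poincar\'e duality under a submersion.
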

\begin{proof}
For an oriented compact manifold $Y$, write $D_Y$ for the Poincar\'e duality isomorphism from cohomology to homology, given by cap product with its fundamental class. Define
\[
 p^!=D_M\circ p^*\circ D_T^{-1}:H_1(T,\Z)\longrightarrow H_3(M,\Z).
\]
This is the degree-two homological transfer associated with the oriented real two-dimensional fibers. Naturality of Poincar\'e duality for inverse images under a submersion gives $p^![\gamma]=[p^{-1}(\gamma)]$, with the induced orientation. The projective bundle formula and $H^1(T,\Z)=0$ show that $p^*:H^3(T,\Z)\to H^3(M,\Z)$ is an isomorphism. Therefore $p^!$ is an isomorphism, including on torsion.
\end{proof}

The relation $n[Z_\gamma]=0$ also has a geometric bounding chain. Since $n[\gamma]=0$, there is a compact oriented surface $W$ and a map $f:W\to T$ whose boundary is the union of $n$ oriented circles, each mapped as $\gamma$. One construction uses the fact that a loop with trivial image in the abelianization of $\pi_1(T)$ is a product of commutators; a surface with handles and boundary realizes this relation. Choose $f$ smooth, keeping its boundary maps fixed. The pullback of $M$ to $W$ is an oriented real four-manifold with boundary, mapping to $M$. Its boundary consists of $n$ copies of $Z_\gamma$. Its pushed-forward fundamental chain $B$ therefore satisfies
\[
 \partial B=n Z_\gamma.
\]
The transfer isomorphism proves that no smaller positive multiple bounds when $[\gamma]$ has order $n$. Choosing loops representing the cyclic generators of $G$ gives geometric representatives for all the torsion generators in $H_3(M,\Z)$, and Poincar\'e duality gives the corresponding classes in $H^3(M,\Z)$.

\subsection*{The two-line fibers of a conic bundle}
A conic bundle over a smooth surface $B$ is a flat projective morphism whose fibers are plane conics, described locally in a projective-plane bundle by homogeneous quadratic equations. The rank of a fiber is the rank of the symmetric matrix of its quadratic equation. Its discriminant $\Delta\subset B$ is the locus of singular fibers. At a rank-two fiber the conic is the union of two distinct lines meeting at one point. Over the open subset $\Delta^\circ$ of such fibers, the choices of a line form a double cover
\[
 \widetilde\Delta^\circ
 =\{(b,\ell):b\in\Delta^\circ,\ \ell\text{ is a component of the fiber over }b\}
 \longrightarrow\Delta^\circ.
\]
Continuation along a loop can preserve the two lines individually or interchange them; this permutation is the monodromy of the double cover.

Locally along $\Delta^\circ$, the intersection point of the lines can be moved by coordinates to $[0:0:1]$. The singular conics then have equations
\[
 a(s)x^2+b(s)xy+c(s)y^2=0.
\]
This is precisely the binary-quadratic, or two-line, geometry used in our incidence construction. A transversal deformation such as
\[
 xy=t z^2
\]
shows the further geometry of a conic bundle: the fiber at $t=0$ consists of two lines, whereas the fibers at $t\ne0$ are smooth conics. The surface of parameters in our construction lies entirely in the locus of reducible curves; a conic bundle with smooth general fiber meets that locus along its discriminant curve.

Thus the two-line part of the construction describes the singular fibers and their component choices. It does not identify the whole Artin--Mumford threefold with the case $d=2$ of Theorem~\ref{main}. Indeed, with the concurrency point fixed, the parameter space for binary quadratics is $\PP^2$. A closed integral surface embedded in it is $\PP^2$ itself, giving the rational example of Section~4. In the Artin--Mumford setting the general fiber is smooth, and the topology of the discriminant and its component covers is essential.

Artin and Mumford give explicit cycles in \cite[\S2, pp.~81--84]{AM}: transporting a line component around a loop that preserves it gives a three-cycle $\beta$. Their construction also supplies a two-cycle $\alpha$ and chains $\Lambda,U$ with
\[
 \partial\Lambda=2\alpha,\qquad \partial U=2\beta,
 \qquad \Lambda\cdot\beta=1.
\]
The resulting torsion linking number is
\[
 \lambda([\alpha],[\beta])
 =\frac{\Lambda\cdot\beta}{2}\pmod{\Z}
 =\frac12\in\mathbb Q/\Z,
\]
so both classes have exact order two. Here the dot denotes oriented intersection after making the chains transverse. The common geometric operation is a line swept along a loop. In our bundle it is the inverse image of a loop in $T$; in their threefold the relations and nonvanishing are established using the degenerating conics and the linking pairing.

\section{Historical context}

The Symposium Internacional de Topolog\'ia Algebraica took place at Ciudad Universitaria in Mexico City in August 1956. Its proceedings were published by UNAM and UNESCO in 1958; the dates of the meeting and publication are therefore different \cite{UNAMHistory}. Serre's article, \emph{Sur la topologie des vari\'et\'es alg\'ebriques en caract\'eristique $p$}, appeared in that volume, on pages 24--53. Section 16 is a classical reference for the free finite-group quotient construction used in the Godeaux--Serre method. The complex topological formulation used here, including its realization by surfaces, is stated and proved in \cite[\S2.4]{Achinger}.

Serre's 1959 paper \emph{On the fundamental group of a unirational variety} concerns a different question \cite{SerreUnirational}. It proves simple connectedness for smooth projective complex unirational varieties. Thus Serre's work supplies both the construction of projective varieties with prescribed finite fundamental group and a restriction excluding nontrivial such groups in the unirational case.

Hironaka's resolution theorem appeared in 1964 \cite{Hironaka}. In characteristic zero it provides the smooth projective models needed for comparing birational invariants. Artin and Mumford use resolution in their proof of the birational invariance of third-cohomology torsion \cite[\S1, Proposition 1]{AM}. Its stable birational invariance follows by also applying the K\"unneth formula to products with projective space: the only additional contribution in degree three is $H^1$, which is torsion-free.

The rationality problem for threefolds developed rapidly in 1971--1972. Iskovskikh and Manin proved nonrationality of smooth quartic threefolds in 1971 \cite{IM}. Clemens and Griffiths proved nonrationality of smooth cubic threefolds in their 1972 paper, using the principally polarized intermediate Jacobian \cite{CG}. For a smooth cubic threefold $V$, this is the complex torus
\[
 J(V)=\frac{H^3(V,\C)}{F^2H^3(V,\C)+H^3(V,\Z)},
 \qquad F^2H^3=H^{3,0}\oplus H^{2,1},
\]
where the integral group is understood through its image in complex cohomology and $H^{p,q}$ denotes the Hodge decomposition. The intersection pairing supplies the principal polarization. Their argument uses this polarized structure to obstruct rationality; their Appendix B discusses unirationality.

Artin and Mumford's paper was received on November 8, 1971, and published in 1972 \cite{AM}. In its introduction they credit C.~P.~Ramanujam with suggesting torsion in $H^3$ as an obstruction. Their construction produces unirational examples with nonzero $2$-torsion. It supplies a topological obstruction beyond the fundamental group, in a setting where Serre's simple-connectedness theorem applies.

\subsection*{Funding}
A.V. was supported by Proyecto PAPIIT, Direcci\'on General de Asuntos del Personal Acad\'emico, Universidad Nacional Aut\'onoma de M\'exico [grant number IN103324].

\medskip
\noindent\textbf{Alberto Verjovsky}\\
Instituto de Matem\'aticas, Unidad Cuernavaca, Universidad Nacional Aut\'onoma de M\'exico.\\
Av. Universidad s/n, Col. Lomas de Chamilpa, C.P. 62210, Cuernavaca, Morelos, M\'exico.\\
\href{mailto:albertoverjovsky@gmail.com}{\nolinkurl{albertoverjovsky@gmail.com}}

\end{document}